\newcounter{num}[section] %
\newenvironment{theo}
{\refstepcounter{num}%
\bigskip\noindent{\bf Theorem~\arabic{section}.\arabic{num}. }\it}
{\smallskip}
\newenvironment{cor}
{\refstepcounter{num}%
\bigskip\noindent{\bf Corollary~\arabic{section}.\arabic{num}. }\it}
\newenvironment{lemma}
{\refstepcounter{num}%
\bigskip\noindent{\bf Lemma~\arabic{section}.\arabic{num}. }\it}
\newenvironment{theo_with_name}[1]
{\refstepcounter{num}%
\bigskip\noindent{\bf Theorem~\arabic{section}.\arabic{num}} (#1). \it}
\newenvironment{eq}{\begin{equation}}{\end{equation}}
\renewcommand{\Ref}[1]{(\ref{#1})}
\newcommand{\si}{\sigma}
\newcommand{\al}{\alpha}
\newcommand{\be}{\beta}
\newcommand{\ga}{\gamma}
\newcommand{\la}{\lambda}
\newcommand{\ov}[1]{\overline{#1}}
\newcommand{\un}[1]{{\underline{#1}} }
\newcommand{\Sym}{{\mathcal S}}
\newcommand{\upto}{,\ldots ,}
\newcommand{\st}{\,|\,}
\newcommand{\GL}{{\rm GL}}         
\renewcommand{\O}{{\rm O}}         
\newcommand{\Sp}{{\rm Sp}}         
\newcommand{\Op}{{\rm O}_2^{+}}    
\newcommand{\Om}{{\rm O}_2^{-}}    
\newcommand{\calV}{{\mathcal{V}}}
\newcommand{\calN}{{\mathcal{N}}}
\newcommand{\calU}{{\mathcal{U}}}
\newcommand{\calB}{{\mathcal{B}}}
\newcommand{\calD}{{\mathcal{D}}}
\newcommand{\calT}{{\mathcal{T}}}
\newcommand{\FF}{{\mathbb{F}}}   
\newcommand{\KK}{{\mathbb{K}}}   
\newcommand{\NN}{{\mathbb{N}}}
\newcommand{\ZZ}{{\mathbb{Z}}}   
\renewcommand{\k}{\kappa}
\newcommand{\matr}[4]{\left(
\begin{array}{cc}
#1 & #2 \\ 
#3 & #4 \\ 
\end{array}
\right)}
\newcommand{\vect}[2]{
\begin{pmatrix} #1\\ #2 \end{pmatrix}}
\newcommand{\besep}{\be_{\rm sep}}
\newcommand{\sisep}{\si_{\rm sep}}
\newcommand{\stabp}{\mathrm{Stab}^{+}\!}
\newcommand{\e}{\mathsf{e}}
\newcommand{\tb}[1]{#1}
\newcommand{\mylabel}[1]{}
\begin{document}
\title[Separating invariants for two-dimensional orthogonal groups over finite fields]{Separating invariants for two-dimensional orthogonal groups over finite fields}

\author{Artem Lopatin}

\author{Pedro Antonio Muniz Martins}

\begin{abstract} We described a minimal separating set for the algebra of $\Op(\FF_q)$-invariant polynomial functions of $m$-tuples of two-dimensional vectors over a finite field $\FF_q$. 
 
\noindent{\bf Keywords: } invariant theory, vector invariants, orthogonal group,  separating invariants, generators.

\noindent{\bf 2010 MSC: } 16R30; 13A50.
\end{abstract}

\maketitle

\section{Introduction}\label{section_intro}

\subsection{Separating invariants}\label{section_separ} All vector spaces, algebras, and modules are over an arbitrary (possibly finite) field $\FF$ of arbitrary characteristic $p\geq0$ unless otherwise stated.  

Consider an $n$-dimensional vector space $\calV$ over the field $\FF$ with a fixed basis $v_1,\ldots,v_n$.  The coordinate ring $\FF[\calV]  = \FF[ x_1 \upto x_n]$ of $\calV$ is isomorphic to the symmetric algebra $S(\calV^{\ast})$ over the dual space $\calV^{\ast}$, where $x_1 \upto x_n$ is the dual basis for $\calV^\ast$.  Let $G$ be a subgroup of $\GL(\calV) \cong \GL_n(\FF)$. \tb{The space $\calV^{\ast}$ becomes a $G$-module with
\begin{eq}\label{eq_action}
(g \cdot f)(v) = f(g^{-1}\cdot v) 
\end{eq}%
for all $f \in \calV^\ast$ and $v \in \calV$. This action can be extended to the action of $G$ on the algebra $\FF[\calV]$ by the linearity and multipicativity.} The algebra of {\it polynomial invariants} is defined as follows:
$$ \FF[\calV]^G = \{ f \in \FF[\calV] \mid g\cdot f=f  \, \text{ for all  } g \in G \}.  
$$
\noindent{}For an arbitrary infinite extension $\FF\subset \KK$ we can consider any element $f\in\FF[\calV]$ as the \tb{map $ \calV \otimes_{\FF} \KK \to \KK$}. Therefore,  we have 
\begin{eqnarray*} \FF[\calV]^G &= & \{ f \in \FF[\calV] \mid f(g\cdot v)=f(v) \, \text{ for all } g\in G, v \in \calV \otimes_{\FF} \KK \}  \\
& \subset & \{ f \in \FF[\calV] \mid f(g\cdot v)=f(v) \, \text{ for all } g\in G, v \in \calV \} \end{eqnarray*}

In 2002 Derksen and Kemper~\cite{Derksen_Kemper_book} (see~\cite{Derksen_Kemper_bookII} for the second edition) introduced the notion of separating invariants as a weaker concept than generating invariants.   Given a subset $S$ of $\FF[\calV]^G$, we say that elements $u,v$ of $\calV$ {\it are separated by $S$} if  exists an invariant $f\in S$ with $f(u)\neq f(v)$. If  $u,v\in \calV$ are separated by $\FF[\calV]^G$, then we simply say that they {\it are separated}. A subset $S\subset \FF[\calV]^G$ of the invariant ring is called {\it separating} if for any $u, v$ from $\calV$ that are separated we have that they are separated by $S$. We say that a separating set is {\it minimal} if it is minimal w.r.t.~inclusion. Obviously, any generating set is also separating. Denote by $\besep(\FF[\calV]^G)$ the minimal integer $\besep$ such that the set of all invariant polynomials of degree  less or equal to $\besep$ is separating for $\FF[\calV]^G$. Minimal separating sets for different actions were constructed in~\cite{Cavalcante_Lopatin_2020, Domokos_2020, Domokos_2020Add, Lopatin_Ferreira_2023, Kaygorodov_Lopatin_Popov_2018, Kemper_Lopatin_Reimers_2022, Lopatin_Reimers_2021, Lopatin_Zubkov_ANT, Reimers_2020}.

Separating invariants for $\FF[\calV]^G$ in case $\FF = \FF_q$ were studied by Kemper, Lopatin, Reimers in~\cite{Kemper_Lopatin_Reimers_2022}. In particular, a minimal separating set for multi-symmetric polynomials (i.e. the invariants of the symmetric group $\Sym_n$ acting on $V^m$) over $\FF_2$ was found. Note that separating sets for multi-symmetric polynomials over an arbitrary field were studied in~\cite{Lopatin_Reimers_2021}. Recently Domokos and Miklosi~\cite{Domokos_Miklosi_2023} constructed quite small separating set for multisymmetric polynomials over a finite field.

\subsection{Vector invariants}\label{section_vector} 

The algebra of $G$-invariants of vectors is the algebra $\FF[\calV]^G$ with $\calV=V^m$, where $V=\FF^n$, $V^m=V\oplus \cdots \oplus V$ ($m$ times), and the group $G<\GL_n(\FF)$ acts on $V^m$ diagonally: $g\cdot (u_1,\ldots,u_m)=(g u_1,\ldots,g u_m)$ for $g\in G$ and $u_1,\ldots,u_m\in V$.  Over a field $\FF$ of characteristic zero $O_n(\FF)$- and $\Sp_n(\FF)$-invariants of vectors as well as $\GL_n$-invariants of vectors and covectors were described by Weyl~\cite{Weyl_1939}. These results were extended to the case of an arbitrary infinite field by De Concini and Procesi in~\cite{DeConcini_Procesi_1976}, where the characteristic of $\FF$ is odd in case of the orthogonal group. Orthogonal invariants of vectors over an algebraically closed field of characteristic two were studied by Domokos and Frenkel in~\cite{Domokos_Frenkel_2004, Domokos_Frenkel_2005}, but a description of generating invariants is still unknown.

As  about the case of finite fields, in 1911 Dickson~\cite{Dickson_1911} explicitly constructed generators for the algebra of invariants  $\FF_q[V]^{\GL_n(\FF_q)}$. A description of generators for  $\FF_q[V]^{\Sp_n\!(\FF_q)}$ for even $n$ can be found in Section 8.3 of book~\cite{Benson_book_1993} by Benson.  In~\cite{BonnafeKemper_2011} Bonnaf\'e and Kemper  formulated the conjecture on minimal generating set for the algebra of invariants  $\FF_q[V\oplus V^{\ast}]^{\GL_n(\FF_q)}$ of vector and covector, which was confirmed by Chen and Wehlau~\cite{ChenWehlau_2017}. In characteristic two case Chen~\cite{Chen_2018} constructed a minimal generating set for the algebra of orthogonal invariants $\FF_q[V^m]^{\Op(\FF_q)}$ for two dimensional vector space $V$. Kropholler, Mohseni-Rajaei, and Segal~\cite{KrophollerK_2005} described generators and relations between generators for the algebra of invariants $\FF_2[V]^{\O_n(\FF_2,\xi)}$ for the orthogonal group $\O_n(\FF_2,\xi)$ which preserves a non-singular quadratic form $\xi$ on $V$, where $n$ is odd. For $n=4$, a field $\FF$ of odd characteristic, and the quadratic form $\xi=x_1^2-x_2^2+x_3^2-x_4^2$ on $V$ a generating set for $\FF_q[V]^{\O(\FF_q,\xi)}$ was given in~\cite{Chu_2001}.

\subsection{Orthogonal invariants}\label{section_O}

\tb{There are exactly two orthogonal groups for $V=\FF_q^2$: $\Op(\FF_q)$ and $\Om(\FF_q)$ (for example, see Section 2 of~\cite{Chen_2018} and page 213 of~\cite{Neusel_Smith_book}). Note that the order of $\Op(\FF_q)$ is divisible by the characteristic of $\FF_q$ (i.e.,  we have the  {\it modular} case) if and only if $q$ is a $2$-power. The modular case is of more interest, since in the non-modular case many classical tools can be used.  }

For $\al\in\FF_q^{\times}$ denote 
$$\si_{\al}=\matr{0}{\al}{\al^{-1}}{0} \text{ and } \tau_{\al}=\matr{\al}{0}{0}{\al^{-1}}. $$
Then the group $\Op(\FF_q)=\{\si_{\al},\tau_{\al} \st \al\in\FF_q^{\times}\}$ is generated by $\si_1$ and $\tau_{\al}$ for all $ \al\in\FF_q^{\times}$. Given $v\in V$, we denote $v=(v(1),v(2))$. For $m\geq1$ the  coordinate ring of $V^m$ is $\FF_q[V^m]=\FF_q[x_1,\ldots,x_m,y_1,\ldots,y_m]$, where $x_i,y_i\in V^{\ast}$ are defined by
$x_i(\un{v})=v_i(1)$ and $y_i(\un{v})=v_i(2)$ for all $1\leq i\leq m$ and $\un{v}=(v_1,\ldots,v_m)\in V^m$. The action of $\Op(\FF_q)$ on $\FF_q[V^m]$ is given by $\si_1\cdot x_i = y_i$, $\si_1\cdot y_i = x_i$, $\tau_{\al}\cdot x_i=\al^{-1} x_i$, $\tau_{\al}\cdot y_i = \al\, y_i$ for all $1\leq i\leq m$ and $\al\in\FF^{\times}$. For short, we write $\ov{m}=\{1,2,\ldots,m\}$. Given $\un{i}\in \NN^m$, we denote $|\un{i}|=i_1+\cdots+i_m$, where $\NN=\{0,1,2,\ldots\}$. \tb{It is easy to see that the following elements are invariants from $\FF_q[V^m]^{\Op(\FF_q)}$}:
\begin{enumerate}
\item[$\bullet$] $\calN=\Bigl\{  N_i = x_i y_i \,\bigl|\, i\in \ov{m}\Bigr\}$, 

\item[$\bullet$] $\calU=\Bigl\{  U_{ij} = x_i y_j + x_j y_i \,\bigl|\, 1\leq i<j\leq m\Bigr\}$, 

\item[$\bullet$] $\calB=\Bigl\{  B_{\un{i}} = x_1^{i_1} \cdots x_m^{i_m} + y_1^{i_1} \cdots y_m^{i_m} \,\bigl|\,\, \un{i}\in \NN^m, \; |\un{i}|=q-1\Bigr\}$, 

\item[$\bullet$] $\calD=\Bigl\{ d_{IJ}=x_I y_J + x_J y_I \,\bigl|\, \emptyset\neq I<J \subset \ov{m},\, |J|-|I| \text{ is  }0 \text{ or } (q-1)\Bigl\}$, where
\begin{enumerate}
\item[(a)] $x_I=\prod_{i\in I} x_i$ and $y_J=\prod_{j\in J} y_j$;

\item[(b)] $I<J$ stands for the condition that $i<j$ for all $i\in I$ and $j\in J$.
\end{enumerate}
\end{enumerate}

\begin{theo_with_name}{Chen~\cite{Chen_2018}, Theorem 1.1}\label{theo_Chen} In case $p=2$
the set $\calN \cup \calB \cup \calD$ is a minimal generating set for the algebra of invariants $\FF_q[V^m]^{\Op(\FF_q)}$.
\end{theo_with_name}
\medskip

\subsection{Results}\label{section_results}

In Theorem~\ref{theo_separ} we explicitly described a minimal separating set for $\FF_q[V^m]^{\Op(\FF_q)}$ for all $m>0$. Note that  the constructed separating set is much smaller than the minimal generating set from Theorem~\ref{theo_Chen} in case $p=2$ and $m>1$. We also classified $\Op(\FF_q)$-orbits on $V^m$ in Theorem~\ref{theo_orbit}. As a corollary to Theorem~\ref{theo_separ} in Section~\ref{section_cor} we defined and described $\sisep$ for $\FF_q[V^m]^{\Op(\FF_q)}$ as well as $\besep$.

\subsection{Notations}\label{section_notations}

Given $v\in V$ and $r\geq 0$, we write $v^{(r)}$ for $(\underbrace{v,\ldots,v}_r)\in V^r$. We say that $\un{v}\in V^m$ has no zeros if $v_i$  is non-zero for all $i$. If for $\un{u},\un{v}\in V^m$ there exists $g\in \Op(\FF_q)$ such that $g\cdot \un{u}=\un{v}$, then we write $\un{u}\sim \un{v}$.  Given $v\in V$, we write $\stabp(v)$ for the stabilizer of $v$ in the group $\Op(\FF_q)$.




\section{Classification of $\Op(\FF_q)$-orbits}\label{section_orbit} 

Given $\al\in\FF_q$, for short we write  $\e_{\al}$ for $\vect{1}{\al}\in V$. For $\al\in \FF_q^{\times}$ denote by $\Omega_{\al}$ any set of representatives of orbits of $\ZZ_2\simeq \{\tau_1,\si_{\al^{-1}}\}$ on the set
$$S_{\al}=\left\{\vect{\be}{\ga} \,\Bigl|\, \be,\ga \in\FF_q, \; \al\be\neq \ga \right\} = V \, \backslash\, \FF_q \e_{\al}.$$
Note that each of these orbits contains exactly two elements.  The following remark is  trivial.

\begin{remark}\label{remark_stabi}
\begin{enumerate}
\item[1.] Assume that $\al,\be\in\FF_q$ are not both equal to zero. Then
\begin{enumerate}
\item[$\bullet$] $\stabp{\vect{\al}{\be}} = \{\tau_1\}$ in case $\al=0$ or $\be=0$,

\item[$\bullet$] $\stabp{\vect{\al}{\be}} = \{\tau_1, \si_{\al\be^{-1}}\}$ in case $\al$ and $\be$ are non-zero.
\end{enumerate}

\item[2.] For every non-zero $u,v\in V$ with $\FF_q u\neq \FF_q v$ we have that $\stabp(u)\bigcap \stabp(v) = \{\tau_1\}$.
\end{enumerate}
\end{remark}

\begin{remark}\label{remark_class}
If $v\in V$ is non-zero, then either $v\sim \e_0$ or $v\sim \e_{\al}$, where $\al\in \FF_q^{\times}$.
\end{remark}
\begin{proof} Acting by $\si_1$, we can assume that $v=\vect{\be}{\ga}$ with $\be\in\FF^{\times}$ and $\ga\in\FF_q$. Acting by $\tau_{\be^{-1}}$ on $v$, we obtain the required.
\end{proof}

\begin{lemma}\label{lemma_equiv} Assume that $u,v,w,u',v',w'\in V$ and $\al\in\FF_q^{\times}$. 
\begin{enumerate}
\item[1.] If $\left(\e_{0},u\right) \sim \left(\e_{0},u'\right)$, then $u=u'$.

\item[2.] If $\left(\e_{\al},v\right) \sim \left(\e_{\al},v'\right)$ and $v\in \FF_q \e_{\al}$, then $v=v'$.

\item[3.] If $\left(\e_{\al},w\right) \sim \left(\e_{\al},w'\right)$ and $w,w'\in \Omega_{\al}$, then $w=w'$.

\item[4.] If $\left(\e_{\al},w,u\right) \sim \left(\e_{\al},w',u'\right)$ and $w,w'\in \Omega_{\al}$, then $w=w'$ and $u=u'$.
\end{enumerate}
\end{lemma}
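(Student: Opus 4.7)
The plan is to reduce each part of the lemma to an analysis of the stabilizer $\stabp(\e_\al)$, which is already computed explicitly in Remark~\ref{remark_stabi}. The general strategy: an equivalence $(\e_\al,\ldots)\sim(\e_\al,\ldots)$ forces the witness $g\in\Op(\FF_q)$ to satisfy $g\cdot\e_\al=\e_\al$, so $g\in\stabp(\e_\al)$, and this stabilizer is very small (either trivial or of order $2$). Then I compare the actions of its elements on the remaining coordinates.

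First I would handle Part~1: applying Remark~\ref{remark_stabi}(1) with $\al=1$, $\be=0$ gives $\stabp(\e_0)=\{\tau_1\}$. So if $(\e_0,u)\sim(\e_0,u')$, the group element realising this equivalence must be $\tau_1$, hence $u=u'$. For Part~2, Remark~\ref{remark_stabi}(1) yields $\stabp(\e_\al)=\{\tau_1,\si_{\al^{-1}}\}$; I would then observe by a direct matrix computation that $\si_{\al^{-1}}\cdot\e_\al=\e_\al$, and moreover $\si_{\al^{-1}}$ fixes every scalar multiple of $\e_\al$ (this is immediate from linearity plus the previous identity). Thus both elements of $\stabp(\e_\al)$ fix $v\in\FF_q\e_\al$, so $v=v'$.

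For Part~3, once again $\stabp(\e_\al)=\{\tau_1,\si_{\al^{-1}}\}$, so the witness $g$ satisfies $w'=g\cdot w$ with $g\in\{\tau_1,\si_{\al^{-1}}\}$; by definition this means $w$ and $w'$ lie in the same orbit of the group $\ZZ_2\simeq\{\tau_1,\si_{\al^{-1}}\}$ acting on $S_\al$. Since $\Omega_\al$ is a set of representatives of such orbits and both $w,w'$ are in $\Omega_\al$, they must coincide. Part~4 is the capstone: from Part~3 I get $w=w'$ directly. To conclude $u=u'$, I need to show that the witness $g$ must actually be $\tau_1$. Here I would invoke Remark~\ref{remark_stabi}(2): since $w\in\Omega_\al\subset S_\al$ implies $w$ is non-zero and $w\notin\FF_q\e_\al$, we have $\FF_q w\neq\FF_q\e_\al$, hence $\stabp(\e_\al)\cap\stabp(w)=\{\tau_1\}$. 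Therefore $g=\tau_1$, giving $u=u'$.

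The only subtle point, which I would double-check carefully, is the argument in Part~4 that $g$ is forced to be $\tau_1$ rather than $\si_{\al^{-1}}$: one could be tempted to argue only from $w=w'$, but since $w=w'$ does not a priori exclude that $g$ fixes $w$ non-trivially, the appeal to Remark~\ref{remark_stabi}(2) (which rules out such a non-trivial common stabilizer for vectors in distinct lines) is essential. Everything else is a mechanical unwinding of the definitions plus the two parts of Remark~\ref{remark_stabi}.
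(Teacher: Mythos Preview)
Your proposal is correct and follows essentially the same approach as the paper: each part is reduced to the explicit description of $\stabp(\e_\al)$ from Remark~\ref{remark_stabi}, and Part~4 uses Remark~\ref{remark_stabi}(2) exactly as the paper does. Your treatment of Part~2 (noting that $\si_{\al^{-1}}$ fixes every scalar multiple of $\e_\al$ by linearity) is slightly more explicit than the paper's one-line ``the stabilizers of $\e_\al$ and $v$ are the same,'' and in fact handles the degenerate case $v=0$ more cleanly.
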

\begin{proof} 
\noindent{\bf 1.} Since the stabilizer of $\e_{0}$ is $\{\tau_1\}$ by part 1 of Remark~\ref{remark_stabi}, we obtain $u=u'$.

\medskip
\noindent{\bf 2.}  Since the stabilizers of $\e_{\al}$ and $v$ are the same, we obtain $v=v'$.

\medskip
\noindent{\bf 3.} Since the stabilizer of $\e_{\al}$ is $\{\tau_1,\si_{\al^{-1}}\}$ by part 1 of Remark~\ref{remark_stabi}, the definition of $\Omega_{\al}$ implies that $w=w'$.

\medskip
\noindent{\bf 4.} By part 3 we have $w=w'$. By part 2 of Remark~\ref{remark_stabi}, the intersection of stabilizers of $\e_{\al}$ and $w$ is $\{\tau_1\}$. Therefore, $u=u'$. 
\end{proof}

\begin{theo}\label{theo_orbit} Assume that the characteristic of $\FF_q$ is arbitrary and $m>0$. Then each $\Op(\FF_q)$-orbit on $V^m$ contains one and only one element, which is called $\Op(\FF_q)$-canonical, of the following type: 
\begin{enumerate}
\item[(0)] $(0,\ldots,0)$; 

\item[(a)] $\left(0^{(r)},\vect{1}{0}, u_1,\ldots,u_t\right)$, where $r\geq0$, $u_1,\ldots, u_t\in V$;

\item[(b)] $\left(0^{(r)},\vect{1}{\al}, \vect{\be_1}{\al\be_1},\ldots,  \vect{\be_s}{\al \be_s}\right)$, where $r,s\geq0$, $\al\in \FF_q^{\times}$, $\be_1,\ldots,\be_s\in \FF_q$;

\item[(c)] $\left(0^{(r)},\vect{1}{\al}, \vect{\be_1}{\al\be_1},\ldots,  \vect{\be_s}{\al\be_s},w,u_1,\ldots,u_t\right)$, where 
\begin{enumerate}
\item[$\bullet$] $r,s,t\geq0$,
\item[$\bullet$] $\al\in \FF_q^{\times}$, $\be_1,\ldots,\be_s\in \FF_q$,
\item[$\bullet$] $w\in \Omega_{\al}$, $u_1,\ldots, u_t\in V$.
\end{enumerate}
\end{enumerate}
\end{theo}
\begin{proof} \noindent{\bf 1.} At first, we show that any $\un{v}\in V^m$ lies in an orbit containing an element from the formulation of the theorem. Obviously, \tb{one can reduce to the case when}  $\un{v}$ has no zeros. Moreover, by Remark~\ref{remark_class}, we assume that $v_1=\e_{\al}$ for some $\al\in\FF_q$. If $\al=0$, then case (a) holds. 

Assume that $\al\neq 0$. Denote 
$$s=\max\{0\leq i\leq m-1 \st v_2,\ldots,v_{\tb{i}+1} \not\in S_{\al}\}.$$
Note that for any non-zero $u\in V$ the conditions $u\not\in S_{\al}$ and $u\in \FF_q \e_{\al}$ are equivalent. Therefore, case (b) holds for $s=m-1$. 

Assume that $s<m-1$. By Remark~\ref{remark_stabi} we have that 
$\stabp(\e_{\al}) = \stabp(v_2) = \cdots= \stabp(v_{s\tb{+1}}) =  
\{\tau_1,\si_{\al^{-1}}\}$. Therefore, acting by  $\{\tau_1,\si_{\al^{-1}}\}$ we can assume that $v_{s+2}\in \Omega_{\al}$. Hence, case (c) holds.

\medskip
\noindent{\bf 2.}  To prove uniqueness, consider some $\Op(\FF_q)$-canonical elements $\un{v},\un{v}'\in V^m$ satisfying the condition
$\un{v} \sim \un{v}'$. Since for each $i$ we have $v_i=0$ if and only if $v'_i=0$, without loss of generality we can assume that $\un{v},\un{v}'$ do not have zeros. By the definition of polynomial invariants for every $f\in \calN \cup \calU \cup \calB \cup \calD$ we have that $f(\un{v})=f(\un{v}')$.  

Since $v_1(1)=v'_1(1)=1$, the condition $N_1(\un{v})=N_1(\un{v}')$ implies that $v_1=v'_1$. 

If $v_1=v'_1=\e_{0}$, then applying part 1 of Lemma~\ref{lemma_equiv} to pares $(v_1,v_i)\sim (v'_1,v'_i)$ we obtain that $v_i=v'_i$, where $2\leq i\leq m$.

Assume $v_1=v'_1=\e_{\al}$ for some $\al\in\FF_q^{\times}$. As in the proof of part 1, $s=\max\{0\leq i\leq m-1 \st v_2,\ldots,v_{\tb{i}+1} \not\in S_{\al}\}$ and  $s'=\max\{0\leq i\leq m-1 \st v'_2,\ldots,v'_{\tb{i}+1} \not\in S_{\al}\}$. Applying part 2 of Lemma~\ref{lemma_equiv} to pares $(v_1,v_i)\sim (v'_1,v'_i)$ we obtain that $s=s'$ and $v_i=v'_i$ for all $\tb{2}\leq i\leq s\tb{+1}$. In particular, if $s=m-1$, then $\un{v}=\un{v}'$ have both type (b).

Assume that $s\leq m-2$, i.e., $\un{v}$, $\un{v}'$ have both type (c). Applying part 3 of Lemma~\ref{lemma_equiv} to \tb{pairs} $(v_1,v_{s+2})\sim (v'_1,v'_{s+2})$ we obtain $v_{s+2}=v'_{s+2}$, since $v_{s+2},v'_{s+2}\in \Omega_{\al}$. 

If $s\leq m-3$, then for every $s+3\leq i\leq m$ we apply part 4 of Lemma~\ref{lemma_equiv} to the triples $(v_1,v_{s+2}, v_i)\sim (v'_1,v'_{s+2},v'_i)$ to obtain $v_i=v'_i$. Hence $\un{v}=\un{v}'$.
\end{proof}
\medskip

\begin{cor}\label{cor_orbit}
\tb{ The number of $\Op(\FF_q)$-orbits on $V^m$ is equal to 
$$\k= \frac{(q^m+1)(q^m+q-2)}{2(q-1)}.$$ }
\end{cor}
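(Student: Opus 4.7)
The plan is to apply Theorem~\ref{theo_orbit}, which identifies each $\Op(\FF_q)$-orbit on $V^m$ with a unique canonical element of type (0), (a), (b) or (c). Thus $\k$ equals the sum of the numbers of canonical elements of the four types, and the task is reduced to four enumerations plus an algebraic simplification.

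First I would count each family separately. Type (0) contributes $1$. For type (a), a canonical element is determined by $r \in \{0,1,\dots,m-1\}$ and a free tuple $(u_1,\dots,u_{m-1-r}) \in V^{m-1-r}$, giving $\sum_{t=0}^{m-1} q^{2t} = (q^{2m}-1)/(q^2-1)$. For type (b), the data are $r \in \{0,\dots,m-1\}$, $\al \in \FF_q^{\times}$ and $(\be_1,\dots,\be_{m-1-r}) \in \FF_q^{m-1-r}$, yielding $(q-1)\sum_{s=0}^{m-1} q^s = q^m-1$. For type (c) the essential new ingredient is $|\Omega_\al|$: since $S_\al = V \setminus \FF_q\e_\al$ has $q^2-q$ elements and every $\ZZ_2$-orbit on $S_\al$ has exactly two elements, $|\Omega_\al| = q(q-1)/2$. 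With $r,s,t \geq 0$ and $r+s+t = m-2$, and with $\al$, the $\be_i$, $w$ and the $u_j$ chosen independently, the contribution is
$$\frac{q(q-1)^2}{2} \sum_{r=0}^{m-2}\sum_{s=0}^{m-2-r} q^s \cdot q^{2(m-2-s-r)}.$$

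Next I would evaluate this double sum. Setting $k = m-2-r$ and reindexing the inner sum by $j = k-s$ turns the inner sum into $q^k(q^{k+1}-1)/(q-1)$, so the type~(c) count becomes
$$\frac{q(q-1)}{2}\sum_{k=0}^{m-2}\bigl(q^{2k+1}-q^k\bigr) = \frac{q(q-1)}{2}\left[\frac{q(q^{2m-2}-1)}{q^2-1} - \frac{q^{m-1}-1}{q-1}\right].$$
Adding the four contributions and clearing denominators over $2(q-1)$ reduces everything to a polynomial identity in $q$ and $q^m$. A direct expansion of $(q^m+1)(q^m+q-2) = q^{2m}+q^{m+1}-q^m+q-2$ shows that the sum equals $\k$.

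The computation is entirely elementary once Theorem~\ref{theo_orbit} is in hand, so there is no real conceptual obstacle; the only delicate point is the type~(c) enumeration, where one must remember that $|\Omega_\al| = q(q-1)/2$ rather than $q^2-q$, and then handle the double geometric sum carefully. As a sanity check, for $m=1$ only types (0), (a) with $r=0$, and (b) with $r=0$ occur, giving $1 + 1 + (q-1) = q+1 = (q+1)(2q-2)/(2(q-1))$, which matches $\k$.
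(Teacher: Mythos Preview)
Your proof is correct and follows essentially the same approach as the paper: both enumerate the four families of canonical elements from Theorem~\ref{theo_orbit}, use $|\Omega_\al|=q(q-1)/2$, and reduce the type~(c) count to a double geometric sum before combining everything into the closed form. The only cosmetic difference is how the double sum for type~(c) is reindexed and simplified.
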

\begin{proof} Denote by $\k_1$, $\k_2$, $\k_3$, respectively, the number of orbits from Theorem~\ref{theo_orbit} of type (a), (b), (c), respectively. We have
$$\k_1=\sum_{t=0}^{m-1} q^{2t} = \frac{q^{2m} -1 }{q^2-1} \quad \text{ and } \quad
\k_2=\sum_{s=0}^{m-1} (q-1)q^{s} = q^m-1.$$%
Note that $|\Omega_{\al}| = |S_{\al}| / 2 = q(q-1)/2$. Hence, for $m\geq 2$ we have
$$\k_3 = \sum (q-1)q^s \,\frac{q(q-1)}{2}\, q^{2t} = \frac{q(q-1)^2}{2} A \quad
\text{ for} \quad A = \sum q^{s+2t},$$
where both sums range over all $s,t\geq0$ with $s+t\leq m-2$. Rewriting $A$ as
$$A=\sum_{t=0}^{m-2}\biggr  (\sum_{s=0}^{m-t-2} q^s \biggr) q^{2t} = \sum_{t=0}^{m-2} \frac{q^{m+t-1} - q^{2t}}{q-1} = 
\frac{1}{q-1} \biggr(  \frac{q^{m-1}-1}{q-1} q^{m-1} - \frac{q^{2(m-1)}-1}{q^2-1}\biggr)$$
$$ = \frac{1}{(q-1)^2 (q+1)}\biggr( q^{2m-1} - q^{m} - q^{m-1} +1 \biggr),$$%
we can see that
\begin{eq}\label{eq_k3}
\k_3 = \frac{q}{2(q+1)} (q^m - 1)(q^{m-1} -1) \;\;\text{ for }\;\; m\geq2
\end{eq}%
Note that in case $m=1$ we have $\k_3=0$; therefore, formula~(\ref{eq_k3}) also holds for $m=1$. Finally, 
$$\k= 1 + \k_1 + \k_2 + \k_3 = \frac{1}{2(q^2 -1)} \biggr( q^{2m+1} + q^{2m} + q^{m+2} - q^{m} + q^2 - q - 2 \biggr) = \frac{(q^m+1)(q^m+q-2)}{2(q-1)}.$$
\end{proof}

\section{$\Op(\FF_q)$-invariants}\label{section_invOp} 

Denote by $\calT_m$ the following set: 
$$N_i=x_i y_i,\quad  T_i = x_i^{q-1} + y_i^{q-1} \quad (1\leq i\leq m), $$
$$U_{ij}=x_i y_j + x_j y_i, \quad 
H_{ij} = x_i x_j^{q-2} + y_i y_j^{q-2} \quad (1\leq i<j\leq m). $$
\noindent{}Denote by $\calT_m^{(2)}$ the following subset of $\calT_m$:
$$N_i,\;T_i  \quad (1\leq i\leq m),\quad U_{ij} \quad (1\leq i<j\leq m).$$
\noindent{}The consideration of $\calT_m^{(2)}$ is motivated by the fact that 
\begin{eq}\label{eq_H}
H_{ij} = T_i\quad \text{ in case } \quad q=2.
\end{eq}%
Note that $\calT_1=\calT_1^{(2)}$. Since $T_i = B_{\un{i}}$ for $\un{i}=(0,\ldots,0,q-1,0,\ldots,0)$ with the only non-zero entry in position $i$ and $H_{ij}= B_{\un{j}}$ for $\un{j}=(0,\ldots,0,1,0\ldots,0,q-2,0,\ldots,0)$ with the only non-zero entries in positions $i$ and $j$, all elements of $\calT_m$ lie in $\FF_q[V^m]^{\Op(\FF_q)}$. In case some $\un{v},\un{v}'\in V^m$ are fixed and $f(\un{v})=f(\un{v}')$ holds for some $f\in \calT_m$, we denote this equality by $(f)$. As an example, see below the proof of Lemma~\ref{lemma_separ1}. 

The following remark follows from Theorem~\ref{theo_Chen} (see also Proposition 2.3, Example 1.5, Example 7.1  from~\cite{Chen_2018}).

\begin{remark}\label{remark_generators} The algebra of invariants $\FF_q[V^m]^{\Op(\FF_q)}$ is minimally generated by
\begin{enumerate}
\item[1.] $N_1$, $T_1=B_{(q-1)}=x_1^{q-1} + y_1^{q-1}$, in case $m=1$. Note that these elements are algebraically independent.

\item[2.] $N_1$, $N_2$, $U_{12}$, $B_{(i,q-i-1)}=x_1^{i} x_2^{q-i-1} + y_1^{i} y_2^{q-i-1}$ for all $1\leq i\leq q-1$, in case $m=2$. Note that here $\calU=\calD$. 

\item[3.] $N_1$, $N_2$, $N_3$, $U_{12}$, $U_{13}$, $U_{23}$, $B_{(i,j,q-i-j-1)}=x_1^i x_2^j x_3^{q-i-j-1} + y_1^i y_2^j y_3^{q-i-j-1}$ for all $1\leq i,j\leq q-1$, in case $m=3$. Note that here $\calU=\calD$. 
\end{enumerate}
\end{remark}

\begin{lemma}\label{lemma_separ1}
The set $\calT_1$ is a minimal separating set for $\FF_q[V]^{\Op(\FF_q)}$.
\end{lemma}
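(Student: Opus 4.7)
The plan is short. First, the separating part is immediate: Remark~\ref{remark_generators}, part~1, says that $\calT_1=\{N_1,T_1\}$ already generates $\FF_q[V]^{\Op(\FF_q)}$ as an $\FF_q$-algebra, and every generating set of an invariant ring is in particular a separating set.

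For the minimality part, I would use a pigeonhole argument on the orbit count. By Corollary~\ref{cor_orbit} with $m=1$ (or directly from Theorem~\ref{theo_orbit}), the $\Op(\FF_q)$-orbits on $V$ are represented by $0$, $\e_0$, and $\e_{\al}$ for $\al\in\FF_q^{\times}$, giving exactly $q+1$ orbits. On the other hand, any $f\in\FF_q[V]^{\Op(\FF_q)}$ is a function $V\to\FF_q$ constant on orbits, hence it induces a map from the set of orbits into $\FF_q$. Now, if $\{f\}$ were separating for some $f\in\calT_1$, then for every pair $u\not\sim v$ (which is separated, e.g.\ by $\calT_1$ itself) we would have $f(u)\neq f(v)$, so the induced map would be \emph{injective}, sending a $(q+1)$-element set into a $q$-element set. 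This is absurd, so neither $\{N_1\}$ nor $\{T_1\}$ is separating, and $\calT_1$ is minimal.

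I do not foresee any genuine obstacle. The one step that deserves mention is the implicit claim that distinct orbits are separated by some polynomial invariant; in the present situation this is free, either from the separating property just established or from the uniqueness half of the proof of Theorem~\ref{theo_orbit}, which shows that the values of the invariants in $\calN\cup\calU\cup\calB\cup\calD$ (for $m=1$ these are exactly $N_1$ and $T_1=B_{(q-1)}$) already determine the canonical form. If a fully explicit argument is preferred instead of pigeonhole, one can exhibit pairs directly: $N_1$ vanishes on both $0$ and $\e_0$, while $T_1(\e_{\al})=1+\al^{q-1}$ is independent of $\al\in\FF_q^{\times}$, with a separate check $T_1(0)=T_1(\e_1)=0$ treating the small case $q=2$.
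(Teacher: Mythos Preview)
Your argument is correct, but both halves differ from the paper's route. For the separating part, the paper does not invoke generators at all: it works directly with the canonical representatives of Theorem~\ref{theo_orbit} and checks that $N_1$ (and then $T_1$ when needed) already distinguish them. Your shortcut via Remark~\ref{remark_generators} is valid and quicker, though you should be aware that in the paper that remark is deduced from Theorem~\ref{theo_Chen}, which is stated only for $p=2$; the general case of part~1 is proved independently later as Lemma~\ref{lemma_m1}, so the content you need is certainly available. For minimality, the paper gives precisely the explicit pairs you list at the end of your proposal, whereas your primary argument is the pigeonhole count: $q+1$ orbits (Corollary~\ref{cor_orbit}), all separable by the first part, cannot be injected into $\FF_q$ by a single invariant. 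This is a genuinely different and rather elegant argument; it proves more (no singleton subset of $\FF_q[V]^{\Op(\FF_q)}$ can be separating, not just no singleton of $\calT_1$), but it relies on the orbit count, while the paper's explicit pairs are entirely self-contained and reusable in the later proofs for $m\ge 2$.
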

\begin{proof} \noindent{\bf 1.} Assume that $v,v'\in V$ are not separated by $\calT_1$. Without loss of generality, we can assume that $v$ and $v'$ are $\Op(\FF_q)$-canonical (see Theorem~\ref{theo_orbit}). 

Let $v=0$ and $v'= \vect{\al}{\be}$. Since $N_1(v')=T_1(v')=0$, we obtain $\al\be=0$ and $\al^{q-1}+\be^{q-1}=0$. Hence $\al=\be=0$.

Assume that $v$ and $v'$ are non-zero. Then $v=\e_{\al}$ and $v'=\e_{\al'}$ for some $\al,\al'\in\FF_q$. Since $(N_1)$, we obtain $\al=\al'$. 

\medskip
\noindent{\bf 2.} The minimality follows from the facts that $0\not\sim\e_{0}$ are not separated by $\{N_1\}$ and \tb{$e_{\al}\not\sim\e_{\be}$ are not separated by $\{T_1\}$, where $\al\neq\be$ lie in $\FF_q$ and 
\begin{enumerate}
\item[$\bullet$] $\al,\be$ are non-zero in case $q>2$; 
\item[$\bullet$] $\al=0$, $\be=1$ in case $q=2$. 
\end{enumerate}
}
\end{proof}

\begin{lemma}\label{lemma_separ2}
The set 
\begin{enumerate}
\item[$\bullet$] $\calT_2^{(2)}$, in case $q=2$; 

\item[$\bullet$] $\calT_2$, \;\,  in case $q>2$; 
\end{enumerate}
is a minimal separating set for $\FF_q[V^2]^{\Op(\FF_q)}$.
\end{lemma}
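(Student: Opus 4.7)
The plan is to follow the template of Lemma~3.2, splitting the analysis according to the shape of $v_1$ in the canonical form classification of Theorem~2.5. Assuming $\un{v},\un{v}'\in V^2$ are canonical and not separated by the claimed set, I observe first that Lemma~3.2 applied to the subset $\{N_1,T_1\}$ gives $v_1\sim v'_1$, and since a canonical first entry is either $0$, $\e_0$, or $\e_\al$ with $\al\in\FF_q^{\times}$ (the canonical forms for $m=1$), this forces $v_1=v'_1$. It remains to show $v_2=v'_2$ in each of three cases.

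If $v_1=0$, canonicity forces $v_2$ and $v'_2$ themselves to be canonical single-vector forms, so $\{N_2,T_2\}$ separates them by Lemma~3.2. If $v_1=\e_0$, write $v_2=\vect{\ga}{\de}$ and $v'_2=\vect{\ga'}{\de'}$; then $(U_{12})$ reads $\de=\de'$, after which $(N_2)$ yields $\ga=\ga'$ whenever $\de\neq 0$. For $\de=0$ the identity $(T_2)$ reads $\ga^{q-1}=\ga'^{q-1}$, which in $\FF_q$ only distinguishes whether the entries vanish; when $\ga,\ga'$ are both nonzero and $q>2$, the identity $(H_{12})$ gives $\ga^{q-2}=\ga'^{q-2}$, i.e.\ $\ga^{-1}=\ga'^{-1}$ and so $\ga=\ga'$. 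For $q=2$ this last ambiguity cannot arise since $\FF_2^{\times}=\{1\}$, which is precisely why $H_{12}$ may be dropped from $\calT_2^{(2)}$.

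The case $v_1=\e_\al$ with $\al\in\FF_q^{\times}$ is the technical core. Writing $v_2=\vect{\be}{\ga}$, one computes $N_2=\be\ga$ and $U_{12}=\al\be+\ga$, so $\al\be$ and $\ga$ are the two roots of $t^2-U_{12}\,t+\al N_2$ and hence $\{\al\be,\ga\}$ is determined as an unordered pair by $(N_2,U_{12})$. The two orderings correspond precisely to $v_2$ and $\si_{\al^{-1}}(v_2)=\vect{\al^{-1}\ga}{\al\be}$, the latter being the image of $v_2$ under the nontrivial element of $\stabp(\e_\al)=\{\tau_1,\si_{\al^{-1}}\}$. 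Thus $(N_2,U_{12})$ determines $v_2$ modulo $\stabp(\e_\al)$, so $\un{v}'=g\cdot\un{v}$ for some $g\in\stabp(\e_\al)$, and Theorem~2.5 gives $\un{v}=\un{v}'$. Matching the invariants $(N_2,U_{12})$ to the stabilizer structure is the main obstacle, and is what makes $H_{12}$ superfluous in this case.

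For minimality, for each $f$ in the claimed set I exhibit a pair of distinct canonical tuples on which every other element of the set agrees. The pairs $((0,0),(\e_0,0))$ and $((0,0),(0,\e_0))$ handle $T_1$ and $T_2$; for $N_1$ and $N_2$ I use $((\e_\al,0),(\e_\be,0))$ and its swap with $\al\neq\be$ in $\FF_q^{\times}$ when $q>2$, replaced for $q=2$ by $((0,0),(\e_1,0))$ and its swap; for $U_{12}$ I use $((\e_0,\vect{0}{\de}),(\e_0,\vect{0}{\de'}))$ with $\de\neq\de'$ in $\FF_q^{\times}$ when $q>2$, replaced for $q=2$ by $((\e_0,\e_0),(\e_0,\vect{0}{1}))$; and, only when $q>2$, for $H_{12}$ I use $((\e_0,\vect{\ga}{0}),(\e_0,\vect{\ga'}{0}))$ with $\ga\neq\ga'$ in $\FF_q^{\times}$. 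Checking each pair against the remaining invariants is a routine computation.
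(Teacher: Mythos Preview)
Your proof is correct and follows the same overall scaffold as the paper (reduce to canonical forms, fix $v_1=v_1'$ via Lemma~3.2, then split on the shape of $v_1$). The handling of $v_1=0$ and $v_1=\e_0$ is essentially identical to the paper's, with only cosmetic differences (you invoke $(N_2)$ before $(T_2)$ in the $\e_0$ case, which is harmless).

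Where you genuinely diverge is the case $v_1=\e_\al$ with $\al\neq0$. The paper first uses Lemma~3.2 to conclude $v_2\sim v_2'$, writes $v_2'=\tau_\la v_2$ or $v_2'=\si_\la v_2$ for some $\la\in\FF_q^\times$, and then pushes the equality $(U_{12})$ through each subcase to pin down $\la$ and force a contradiction. Your Vieta argument is shorter and more conceptual: from $(N_2)$ and $(U_{12})$ you recover the unordered pair $\{\al\be,\ga\}$ as the roots of $t^2-U_{12}\,t+\al N_2$, which immediately yields $v_2'\in\{v_2,\si_{\al^{-1}}v_2\}$ and hence $\un{v}'\in\stabp(\e_\al)\cdot\un{v}$. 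This bypasses the $\tau_\la/\si_\la$ case split entirely and makes transparent why only $N_2$ and $U_{12}$ (not $T_2$ or $H_{12}$) are needed in this branch. Your minimality argument is also more explicit than the paper's, which only exhibits witnesses for $U_{12}$ and $H_{12}$ and leaves the necessity of $N_i,T_i$ to the reader via Lemma~3.2; your list of pairs covers all six invariants directly.
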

\begin{proof} \noindent{\bf 1.} Assume that the set from the formulation of the lemma is not separating. Then by formula~\Ref{eq_H} we can assume in both cases that there are $\un{v},\un{v}'\in V^2$, which are not separated by $\calT_2$ but $\un{v}\not\sim\un{v}'$.  Without loss of generality, we can assume that $\un{v}$ and $\un{v}'$ are $\Op(\FF_q)$-canonical (see Theorem~\ref{theo_orbit}). Moreover, we can assume that $\un{v}$ and $\un{v}'$ have no zeros, since $v_i=0$ if and only if $v'_i=0$ for every $i=1,2$ (see Lemma~\ref{lemma_separ1}).

Applying Lemma~\ref{lemma_separ1} to $v_1$ and $v'_1$ we obtain that $v_1=v'_1=\e_{\al}$ for some $\al\in\FF$. 
Denote $v_2=\vect{\be}{\ga}$ and $v'_2=\vect{\be'}{\ga'}$.

Assume $\al=0$. Equalities $(U_{12})$ and $(T_2)$ imply that  $\ga=\ga'$ and $\be^{q-1}=(\be')^{q-1}$, respectively.
If $q=2$, then $\be=\be'$; a contradiction. If $q>2$, then $(H_{12})$ implies that $\be^{q-2}=(\be')^{q-2}$ and we obtain $\be=\be'$; a contradiction.


Assume $\al\neq 0$. Lemma~\ref{lemma_separ1} implies that $v_2\sim v'_2$. Therefore, there exists $\la\in\FF_q^{\times}$ such that 
$$v_2' = \tau_{\la}\cdot v_2 = \vect{\la \be}{\la^{-1} \ga}  \quad \text{ or }\quad
v_2' = \si_{\la}\cdot v_2 = \vect{\la \ga}{\la^{-1} \be}.
$$%
\noindent{}Consider the equality $(U_{12})$: $\ga + \al\be = \ga' + \al\be'$.

Assume $v'_2=\tau_{\la}\cdot v_2$. Then $(U_{12})$ implies  
$$\ga + \al\be = \ga\la^{-1} + \al\be\la.$$
Thus  $\ga(1 - \la^{-1}) = \al\be\la(1 - \la^{-1})$.  We have $\la \neq 1$ and $\ga=\al\be\la$, since otherwise  $v_{2}'= v_{2}$; a contradiction.  Note that $\be\neq 0$, since otherwise $\ga = 0$ and $v_2 = 0$;  a contradiction.  Hence $\la = \ga\al^{-1}\be^{-1}$ and  $v_{2}'=\vect{\al^{-1}\ga}{\al\be}$. Remark~\ref{remark_stabi} implies that $\un{v}' =\si_{\al^{-1}}\cdot \un{v}$;  a contradiction.  

Assume $v'_2=\si_{\la}\cdot v_2$. Then $(U_{12})$ implies 
$$\ga +\al\be = \be\la^{-1} + \al\ga\la$$
Thus $\ga\la (\la^{-1} - \al) = \be (\la^{-1} - \al)$. In case $\la=\al^{-1}$ we have $v_{2}' = \vect{\al^{-1} \ga}{\al\be}$ and  $\un{v}' = \si_{\al^{-1}} \cdot \un{v}$; a contradiction. In case $\la\neq\al^{-1}$ we have $\ga\la =\be$. Note that $\ga \neq 0$, since otherwise $\be = 0$ and $v_2=0$; a contradiction. Hence $\la = \be\ga^{-1}$ and $v_{2}' =\vect{\be}{\ga}=v_2$; a contradiction.

\medskip
\noindent{\bf 2.} In case $q=2$ we have that $(\e_0,\e_0)\not\sim \left(\e_0,\vect{0}{1}\right)$ are not separated by $\calT^{(2)}_2\backslash\{U_{12}\}$.

In case $q>2$ consider some $\al\in \FF_q\backslash \{0,1\}$. Then $(\e_0,\e_0)\not\sim \left(\e_0,\vect{\al}{0}\right)$ are not separated by $\calT_2\backslash\{H_{12}\}$. Moreover, $\left(\e_0,\vect{0}{1}\right)\not\sim \left(\e_0,\vect{0}{\al}\right)$ are not separated by $\calT_2\backslash\{U_{12}\}$. Therefore, the minimality is proven. 
\end{proof}

%

\begin{theo}\label{theo_separ}
Assume that the characteristic of $\FF_q$ is arbitrary. Then the set 
\begin{enumerate}
\item[$\bullet$] $\calT_m^{(2)}$, in case $q=2$; 

\item[$\bullet$] $\calT_m$, \; in case $q>2$; 
\end{enumerate}
is a minimal separating set for $\FF_q[V^m]^{\Op(\FF_q)}$ for all $m>0$.
\end{theo}
\begin{proof}  \noindent{\bf 1.} Assume that the set from the formulation of the lemma is not separating. Then by Lemma~\ref{lemma_separ2} we have that $m>2$. Moreover, by formula~\Ref{eq_H} we can assume in both cases that there are $\un{v},\un{v}'\in V^m$, which are not separated by $\calT_m$ but $\un{v}\not\sim\un{v}'$.  Without loss of generality, we can assume that $\un{v}$ and $\un{v}'$ are $\Op(\FF_q)$-canonical (see Theorem~\ref{theo_orbit}). Moreover, we can assume that $\un{v}$ and $\un{v}'$ have no zeros, since $v_i=0$ if and only if $v'_i=0$ for every $1\leq i\leq m$ (see Lemma~\ref{lemma_separ1}). 

Applying Lemma~\ref{lemma_separ1} to $v_1$ and $v'_1$ we obtain that $v_1=v'_1=\e_{\al}$ for some $\al\in\FF$. Given $2\leq i\leq m$, we apply Lemma~\ref{lemma_separ2} to the pairs $(v_1,v_i)$ and $(v'_1,v'_i)$ to obtain that 
\begin{eq}\label{eq_equiv}
(\e_{\al},v_i)\sim (\e_{\al},v'_i).
\end{eq}

Assume $\al=0$.  Then equivalence~\Ref{eq_equiv} together with part 1 of Lemma~\ref{lemma_equiv} implies that $v_i=v_i'$ for all  $2\leq i\leq m$. Thus $\un{v}=\un{v}'$; a contradiction.

Assume $\al\neq0$. Consider some  $2\leq i\leq m$.  
\tb{ If $v_i\in \FF_q \e_{\al}$, then equivalence~\Ref{eq_equiv} together with part 2 of Lemma~\ref{lemma_equiv} implies that $v_i=v_i'$. Similarly we obtain that $v_i=v_i'$ in case $v'_i\in \FF_q \e_{\al}$. Thus $\un{v}$ is $\Op(\FF_q)$-canonical of type (b) if and only if  $\un{v}'$ is $\Op(\FF_q)$-canonical of type (b). Moreover, in this case we obtain that $\un{v}=\un{v}'$; a contradiction. }

\tb{Therefore,  $\un{v}$ and $\un{v}'$ are both $\Op(\FF_q)$-canonical of type (c). Moreover,
$$\,\un{v}\,=(\e_{\al}, \be_1 \e_{\al}, \ldots,\be_s \e_{\al},w\,,u_1\,,\ldots,u_t),$$
$$\un{v}'=(\e_{\al}, \be_1 \e_{\al}, \ldots,\be_s \e_{\al},w',u'_1,\ldots,u'_t),$$
where $s,t\geq0$, $\be_1,\ldots,\be_s\in\FF_q$, $w,w'\in\Omega_{\al}$, $u_1,\ldots,u_t\in V$ and $u'_1,\ldots,u'_t\in V$. Since $(\e_{\al},w)\sim (\e_{\al},w')$ by equivalence~\Ref{eq_equiv}, part 3 of Lemma~\ref{lemma_equiv} implies that $w=w'$. In case $t=0$ we obtain $\un{v}=\un{v}'$; a contradiction. Hence $t>0$.}

\tb{Since $\un{v}\not\sim\un{v}'$, there exists $1\leq i\leq t$ with $u_i\neq u'_i$.  Equivalence~\Ref{eq_equiv} implies $(\e_{\al},u_i)\sim (\e_{\al},u'_i)$. If $u_i$ or $u'_i$ lies in $\FF_q \e_{\al}$, then as above we obtain $u_i=u'_i$; a contradiction. Therefore, $u_i,u'_i\in S_{\al}$. Part 1 of Remark~\ref{remark_stabi} implies that $u'_i=\si_{\al^{-1}} u_i$. Denote 
$$w=\vect{\be}{\ga}\;\; \text{ and } \;\; u_i=\vect{\la_1}{\la_2},$$
where $\be,\ga,\la_1,\la_2\in\FF_q$ and $\al\be\neq \ga$, $\al \la_1\neq \la_2$. Since $u'_i=\vect{\al^{-1}\la_2}{\al \la_1}$, equality $(U_{s+2,s+i+2})$ implies
$$\be \la_2 + \ga \la_1 = \al\be\la_1 + \al^{-1}\ga\la_2.$$%
Thus $\be(\la_2 - \al\la_1) = \al^{-1} \ga (\la_2 - \al\la_1)$. Since $\la_2\neq \al \la_1$, we have $\be=\al^{-1} \ga$; a contradiction.}


\medskip
\noindent{\bf 2.} 
The minimality follows immediately from the minimality in case $m=2$ (see Lemma~\ref{lemma_separ2}) and the fact that all elements of $\calT_m^{(2)}$ and $\calT_m$ depends on one or two vectors.
\end{proof}

\section{Corollaries}\label{section_cor}

As in Section~\ref{section_separ}, assume that $\calV$ is an $n$-dimensional vector space over $\FF$,  $G$ is a subgroup of $\GL(\calV)$.  The  coordinate ring of $\calV^m$ is $\FF_q[\calV^m]=\FF_q[x_{1,1},\ldots,x_{m,1},\ldots, x_{1,n},\ldots,x_{m,n}]$, where $x_{i,j}\in (V^m)^{\ast}$ is defined by
$x_{i,j}(\un{v})=v_i(j)$  for all $1\leq i\leq m$, $1\leq j\leq n$ and $\un{v}=(v_1,\ldots,v_m)\in V^m$. We say that an $m_0$-tuple $\un{i}\in\NN^{m_0}$ is {\it $m$-admissible} if $1\leq i_1<\cdots < i_{m_0}\leq m$. For any $m$-admissible $\un{i}\in\NN^{m_0}$ and $f\in\FF[\calV^{m_0}]^G$ we define the polynomial invariant  $f^{(\un{i})}\in \FF[\calV^m]^G$ as the result of the following substitutions in $f$: 
\[ x_{1,j} \to x_{i_1,j},\, \ldots, \, x_{m_0,j} \to x_{i_{m_0},j} \quad \text{ (for all } 1 \leq i \leq n).\]
Given a set $S \subset \FF[\calV^{m_0}]^G$, we define its {\it expansion} $S^{[m]} \subset \FF[\calV^m]^G$ by 
\begin{eq}\label{eq_expan}
S^{[m]} = \{ f^{(\un{i})} \,|\, f \in S \text{ and } \un{i}\in\NN^{m_0} \text{ is $m$-admissible}\}.
\end{eq}

\begin{remark}(see \cite[Remark~1.3]{Domokos_2007})\label{remark_expan}
Assume that $S_{1}$ and $S_{2}$ are separating sets for $\FF[\calV^{m_0}]^G$ and assume that $m>m_0$. Then $S_1^{[m]}$ is separating for $\FF[\calV^m]^G$ if and only if $S_2^{[m]}$ is separating for $\FF[\calV^m]^G$.
\end{remark}
\medskip

Denote by $\sisep(\FF[\calV],G)$ the minimal number $m_0$ such that the expansion of some separating set $S$ for $\FF[\calV^{m_0}]^G$ produces a separating set for $\FF[\calV^m]^G$ for all $m \geq m_0$. As an example, in~\cite{Lopatin_Reimers_2021} it was proven that $\sisep(\FF[\calV],\Sym_n) \leq \lfloor \frac{n}{2} \rfloor + 1$ over an arbitrary field $\FF$, where the symmetric group $\Sym_n$ acts on $\calV$ by the permutation of the coordinates. Moreover, $\sisep(\FF[\calV],\Sym_n)=\lfloor  \log_2(n)\rfloor + 1$ in case $\FF=\FF_2$ (see Corollary 4.12 of~\cite{Kemper_Lopatin_Reimers_2022}).

\begin{cor}\label{cor_main_si}
We have
$$\sisep(\FF_q[V],\Op(\FF_q))=2.$$
\end{cor}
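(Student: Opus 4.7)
The plan is to prove $\sisep(\FF_q[V], \Op(\FF_q)) = 2$ by establishing the two inequalities $\sisep \leq 2$ and $\sisep \geq 2$ separately, using the explicit separating sets from Section~\ref{section_invOp} together with the orbit classification of Theorem~\ref{theo_orbit}.

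For the upper bound I would take $S = \calT_2$ in case $q > 2$ and $S = \calT_2^{(2)}$ in case $q = 2$; by Lemma~\ref{lemma_separ2} this $S$ is a separating set for $\FF_q[V^2]^{\Op(\FF_q)}$. Since every element of $\calT_m$ (resp.\ $\calT_m^{(2)}$) depends on the coordinates of at most two of the vectors $v_1, \ldots, v_m$, a direct bookkeeping of the substitutions defining~\Ref{eq_expan} shows that $S^{[m]}$ coincides with $\calT_m$ (resp.\ $\calT_m^{(2)}$): the substitutions applied to $N_1, N_2, T_1, T_2$ recover every $N_i$ and $T_i$ for $1 \leq i \leq m$, while those applied to $U_{12}$ and (when $q > 2$) $H_{12}$ recover every $U_{ij}$ and $H_{ij}$ with $1 \leq i < j \leq m$. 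Theorem~\ref{theo_separ} then guarantees that $S^{[m]}$ is separating for $\FF_q[V^m]^{\Op(\FF_q)}$ for every $m \geq 2$, yielding $\sisep \leq 2$.

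For the lower bound I would appeal to Remark~\ref{remark_expan}: it suffices to exhibit a single separating set $S$ for $\FF_q[V]^{\Op(\FF_q)}$ and some $m \geq 2$ for which $S^{[m]}$ fails to separate a pair. I choose $S = \calT_1 = \{N_1, T_1\}$ (separating by Lemma~\ref{lemma_separ1}) and $m = 2$, with witnesses $\un{v} = (\e_0, \e_0)$ and $\un{v}' = (\e_0, \vect{0}{1})$. Both tuples are $\Op(\FF_q)$-canonical of type~(a) in the sense of Theorem~\ref{theo_orbit}, with distinct second entries, so $\un{v} \not\sim \un{v}'$; they are separated by the invariant $U_{12}$, which evaluates to~$0$ on $\un{v}$ and to~$1$ on $\un{v}'$; and a direct calculation shows that $N_1, N_2, T_1, T_2$ all take the same values ($0$, $0$, $1$, $1$, respectively) on $\un{v}$ and $\un{v}'$ for every $q$. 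Hence $\calT_1^{[2]}$ is not separating, so $\sisep \geq 2$.

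Neither step is genuinely hard: the upper bound is pure index bookkeeping, and the lower bound needs only a single explicit pair together with Theorem~\ref{theo_orbit} and Remark~\ref{remark_expan}. The one subtlety worth noting is that the witness pair must be chosen so that the evaluations of all invariants in $\calT_1^{[2]}$ agree independently of $q$; the pair $(\e_0, \e_0)$ versus $(\e_0, \vect{0}{1})$ is arranged so that each coordinate is either $0$ or $1$, which automatically forces $N_i(\un{v}) = N_i(\un{v}') = 0$ and $T_i(\un{v}) = T_i(\un{v}') = 1$ regardless of $q$.
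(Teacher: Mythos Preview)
Your proof is correct and follows essentially the same route as the paper: identical upper bound via $\calT_2^{[m]}=\calT_m$ (resp.\ $(\calT_2^{(2)})^{[m]}=\calT_m^{(2)}$) combined with Theorem~\ref{theo_separ}, and for the lower bound the paper likewise shows $\calT_1^{[m]}$ is not separating, though it argues this by noting $\calT_1^{[m]}$ is a proper subset of the \emph{minimal} separating set $\calT_m$ (resp.\ $\calT_m^{(2)}$) rather than exhibiting your explicit witness pair. Your pair $(\e_0,\e_0)$ versus $(\e_0,\vect{0}{1})$ is in fact precisely the one used in the minimality portion of Lemma~\ref{lemma_separ2} for $q=2$, so the two arguments are really the same content presented at different levels of abstraction.
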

\begin{proof} For short, denote $\sisep=\sisep(\FF_q[V],\Op(\FF_q))$.
The upper bound $\sisep\leq 2$ follows from Theorem~\ref{theo_separ} and the fact that $\calT_2^{[m]}=\calT_m$, $(\calT_2^{(2)})^{[m]}=\calT_m^{(2)}$ for all $m>1$.

Assume $\sisep=1$. Then by Remark~\ref{remark_expan} and Lemma~\ref{lemma_separ1} we have that $\calT_1^{[m]}$ is a separating set. Since $\calT_1^{[m]}=\{N_1,T_1,\ldots,N_m,T_m\}$ is a proper subset of $\calT_m$ and $\calT_m^{(2)}$ for all $m>1$, we obtain a contradiction to Theorem~\ref{theo_separ}. 
\end{proof}

In case $p=2$ the next lemma follows from Theorem~\ref{theo_Chen} and in case $p>2$ it is well-known. We present to proof for the sake of completeness.  

\begin{lemma}\label{lemma_m1}
\tb{The algebra of invariants $\FF_q[V]^{\Op(\FF_q)}$ is generated by $N_1$ and $T_1$.}
\end{lemma}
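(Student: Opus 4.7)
The plan is to analyze an arbitrary invariant $f \in \FF_q[V]^{\Op(\FF_q)} = \FF_q[x_1,y_1]^{\Op(\FF_q)}$ monomial by monomial, using the two defining invariance conditions to pin down the possible exponent patterns. Write $f = \sum_{i,j \geq 0} c_{ij} x_1^i y_1^j$. Invariance under $\si_1$, which exchanges $x_1$ and $y_1$, forces $c_{ij} = c_{ji}$. Invariance under $\tau_{\al}$ for every $\al \in \FF_q^{\times}$ replaces $x_1^i y_1^j$ by $\al^{j-i} x_1^i y_1^j$, and since $\FF_q^{\times}$ is cyclic of order $q-1$, the condition $c_{ij}\al^{j-i} = c_{ij}$ for all $\al$ forces either $c_{ij} = 0$ or $(q-1) \mid (j-i)$.

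Combining these two conditions, $f$ is an $\FF_q$-linear combination of symmetrized monomials of the following shapes: the diagonal terms $x_1^m y_1^m = N_1^m$, and for $k \geq 1$ the pairs
\[
x_1^m y_1^{m + k(q-1)} + x_1^{m+k(q-1)} y_1^m = N_1^m\,\bigl(x_1^{k(q-1)} + y_1^{k(q-1)}\bigr).
\]
So it suffices to show that each $p_k := x_1^{k(q-1)} + y_1^{k(q-1)}$ lies in $\FF_q[N_1,T_1]$.

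For this, set $a = x_1^{q-1}$ and $b = y_1^{q-1}$, so that $a + b = T_1$ and $ab = (x_1 y_1)^{q-1} = N_1^{q-1}$, while $p_k = a^k + b^k$. The power sums $a^k + b^k$ are polynomials in the elementary symmetric functions $a+b$ and $ab$ by the Newton recursion
\[
p_k = T_1 \, p_{k-1} - N_1^{q-1} \, p_{k-2}, \qquad p_0 = 2, \quad p_1 = T_1,
\]
so a straightforward induction on $k$ places each $p_k$ in $\FF_q[N_1,T_1]$, completing the argument. There is no real obstacle here: the only mildly subtle point is the observation that $\tau_{\al}$-invariance for every $\al$ (rather than just a single generator) yields the divisibility $(q-1)\mid(j-i)$ on monomial exponents, after which the fundamental theorem of symmetric polynomials applied to $a,b$ does the rest.
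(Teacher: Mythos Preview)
Your proof is correct and follows essentially the same route as the paper's: both first use $\tau_\alpha$- and $\si_1$-invariance to reduce to showing that the power sums $p_k = x_1^{k(q-1)} + y_1^{k(q-1)}$ lie in $\FF_q[N_1,T_1]$, and then handle $p_k$ by an induction expressing it via lower power sums. The only cosmetic differences are that the paper first factors out the highest power of $N_1$ from each monomial before applying the invariance conditions, and writes the inductive step as $p_k = T_1^k - f'$ for an invariant $f'$ of lower degree rather than via the explicit Newton recursion you use.
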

\begin{proof} For short, denote $x=x_1$ and $y=y_1$. Consider an invariant $f\in \FF_q[V]^{\Op(\FF_q)}$. Then $f=\sum_{i=0}^k N_1^{i} h_i$, where $h_i$ is a linear combination of $\{x^r,y^s\,|\,r,s\geq0\}$. Since $N_1$ is the invariant, we obtain that $h_i$ is also an invariant. Considering the action of $\tau_{\al}$ on $h_i$, we can see that $h_i$ is a linear combination of $\{x^{(q-1)r},y^{(q-1)s}\,|\,r,s\geq0\}$. Moreover, considering the action of $\si_1$ on $h_i$, we can see that $h_i$ is a linear combination of $\{x^{(q-1)r}+y^{(q-1)r}\,|\,r\geq0\}$. Note that
$$x^{(q-1)r}+y^{(q-1)r} = T_1^r - f'$$
for some invariant $f'$. Applying the above reasoning to $f'$ and using induction by degree, we complete the proof.
\end{proof}

\begin{cor}\label{cor_main_be}  We have 
\begin{enumerate}
\item[$\bullet$] $\besep(\FF_q[V^m]^{\Op(\FF_q)})= 2$, in case $q=2$;

\item[$\bullet$] $\besep(\FF_q[V^m]^{\Op(\FF_q)})= q-1$, in case $q>2$;
\end{enumerate}
\end{cor}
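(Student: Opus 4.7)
The plan is to establish the two claimed equalities via matching upper and lower bounds. The upper bounds are immediate from Theorem~\ref{theo_separ}: for $q=2$ the separating set $\calT_m^{(2)}$ consists of $T_i = x_i+y_i$ of degree $1$ together with $N_i, U_{ij}$ of degree $2$, so $\besep \leq 2$; for $q>2$ every member of $\calT_m$ has degree $2$ or $q-1$, so $\besep \leq q-1$.

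For the matching lower bounds I will exhibit in each case a pair $\un{v},\un{v}'\in V^m$ that is separated yet indistinguishable by any invariant of strictly smaller degree. In both cases I take $\un{v}=0$ and $\un{v}' = (u,0,\ldots,0)$, with $u = \e_0$ when $q>2$ and $u = \vect{1}{1}$ when $q=2$. Separation is witnessed by $T_1(\un{v}')=1\neq 0=T_1(\un{v})$ in the first case and by $N_1(\un{v}')=1\neq 0=N_1(\un{v})$ in the second.

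The step I expect to be the technical crux is proving that no invariant of degree $<\besep$ separates the two chosen points. For $q>2$ and an invariant $f$ of degree $d\leq q-2$, I plan to run a weight argument for the diagonal torus $\{\tau_\al\}_{\al\in\FF_q^\times}$: the monomial $\prod x_i^{a_i} y_i^{b_i}$ scales by $\al^{\sum(b_i-a_i)}$, so invariance forces $q-1\,|\,\sum(b_i-a_i)$ on every monomial appearing in $f$. Since $d\leq q-2 < q-1$, the balance $\sum a_i = \sum b_i$ must actually hold. Evaluating on $\un{v}' = (\e_0,0,\ldots,0)$ then kills every monomial except those with $b_i = 0$ for all $i$ and $a_i = 0$ for $i\geq 2$; the balance condition further forces $a_1=0$, so only the constant term survives and agrees with $f(0)$. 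For $q=2$ a direct check using $\si_1$-invariance shows that the only degree-$1$ invariants are $\FF_2$-linear combinations of $T_1,\ldots,T_m$, and each $T_i$ vanishes on both $\un{v}$ and $\un{v}'$. Combining the two inequalities then yields the stated values of $\besep$.
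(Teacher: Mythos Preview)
Your argument is correct. The upper bounds match the paper's proof verbatim, but your lower-bound argument differs from the paper's. The paper first observes that $\besep(\FF_q[V]^{\Op(\FF_q)})\leq \besep(\FF_q[V^m]^{\Op(\FF_q)})$ by padding with zeros, then invokes Lemma~\ref{lemma_m1} (the $m=1$ invariant ring is generated by $N_1$ and $T_1$) to conclude that invariants of degree below the target are polynomials in $T_1$ alone (when $q=2$) or in $N_1$ alone (when $q>2$), contradicting the minimality part of Lemma~\ref{lemma_separ1}. You instead work directly in $V^m$ with an explicit pair and a torus-weight argument: for $q>2$, invariance under the cyclic group $\{\tau_\al\}$ forces every monomial in a degree $\leq q-2$ invariant to satisfy $\sum a_i=\sum b_i$, whence evaluation at $(\e_0,0,\ldots,0)$ picks out only the constant term; for $q=2$ you compute the degree-one invariants by hand. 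Your route is slightly more elementary in that it avoids Lemma~\ref{lemma_m1} entirely, while the paper's route is more structural and makes the reduction to $m=1$ explicit. Both yield the same witnesses in spirit: your pair for $q>2$ is precisely the pair $0\not\sim\e_0$ used in part~2 of Lemma~\ref{lemma_separ1} to show $T_1$ cannot be dropped, and your pair $0\not\sim\e_1$ for $q=2$ is the one used there to show $N_1$ cannot be dropped.
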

\begin{proof} 
For short, we write $\besep(m)=\besep(\FF_q[V^m]^{\Op(\FF_q)})$. We also denote $\be=2$ in case $q=2$ and $\be=q-1$ in case $q>2$. The upper bound $\besep\leq \be$ follows from Theorem~\ref{theo_separ}. 

 \tb{Assume that $\besep(m)< \be$. Therefore, $\besep(1)<\be$. }

 \tb{Let $q=2$. By Lemma~\ref{lemma_m1} any invariant from $\FF_q[V]^{\Op(\FF_q)}$ of degree $<\be$ is a polynomial in $T_1$. Thus $\{T_1\}$ is separating set for $\FF_q[V]^{\Op(\FF_q)}$; a contradiction to Lemma~\ref{lemma_separ1}. }
 
\tb{Let $q>2$. By Lemma~\ref{lemma_m1} any invariant from $\FF_q[V]^{\Op(\FF_q)}$ of degree $<\be$ is a polynomial in $N_1$. Thus $\{N_1\}$ is separating set for $\FF_q[V]^{\Op(\FF_q)}$; a contradiction to Lemma~\ref{lemma_separ1}.}
 
\end{proof}

\begin{cor}\label{cor_min_number_separ} \tb{
There exists a separating set for $\FF_q[V^m]^{\Op(\FF_q)}$ with $2m$ elements. On the other hand, 
$$|\calT_m^{(2)}|=\frac{1}{2}(m^2 + 3m) \quad \text{and} \quad |\calT_m|=m^2 + m.$$}
\end{cor}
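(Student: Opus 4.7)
The two cardinality claims are direct counts: $\calT_m$ contains $m$ invariants of the form $N_i$, $m$ of the form $T_i$, $\binom{m}{2}$ of the form $U_{ij}$, and $\binom{m}{2}$ of the form $H_{ij}$, giving $|\calT_m| = 2m + 2\binom{m}{2} = m^2 + m$. Dropping the $H_{ij}$'s yields $|\calT_m^{(2)}| = 2m + \binom{m}{2} = (m^2 + 3m)/2$.

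For the existence of a separating set of size $2m$, my plan is a non-constructive counting argument based on Corollary~\ref{cor_orbit}. The first step is to check the inequality $\k \le q^{2m}$, where $\k = \frac{(q^m+1)(q^m+q-2)}{2(q-1)}$. Expanding gives $2(q-1)q^{2m} - (q^m+1)(q^m+q-2) = q^{2m}(2q-3) - q^m(q-1) - (q-2)$, and elementary estimates show this is nonnegative for every $q \ge 2$ and $m \ge 1$. Hence there exists a set-theoretic injection $\iota$ from the (finite) orbit space $V^m/\Op(\FF_q)$ into $\FF_q^{2m}$; write $\iota_1, \ldots, \iota_{2m}$ for its coordinate functions, viewed as $\FF_q$-valued functions on $V^m$ that are constant on $\Op(\FF_q)$-orbits.

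Second, I would realize each $\iota_i$ as an invariant polynomial. Writing $n = |\calT_m|$, by Theorem~\ref{theo_separ} the evaluation map $\bar\Phi \colon V^m/\Op(\FF_q) \to \FF_q^n$ induced by $\calT_m$ is injective, so its image $S \subset \FF_q^n$ has exactly $\k$ points. For each $s = (s_1, \ldots, s_n) \in S$, the polynomial $L_s(Y_1, \ldots, Y_n) = \prod_{i=1}^{n} \prod_{c \in \FF_q \setminus \{s_i\}} \frac{Y_i - c}{s_i - c}$ satisfies $L_s(s) = 1$ and vanishes on $S \setminus \{s\}$, so Lagrange interpolation extends any function $\tilde\psi \colon S \to \FF_q$ to a polynomial $p_{\tilde\psi} \in \FF_q[Y_1, \ldots, Y_n]$; substituting the elements of $\calT_m$ for $Y_1, \ldots, Y_n$ produces an invariant in $\FF_q[V^m]^{\Op(\FF_q)}$ realizing the given function on orbits.

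Letting $f_i$ be the invariant realizing $\iota_i$ in this way, whenever $\un{u} \not\sim \un{v}$ in $V^m$ the injectivity of $\iota$ forces some index $i$ with $\iota_i(\un{u}) \ne \iota_i(\un{v})$, hence $f_i(\un{u}) \ne f_i(\un{v})$; so $\{f_1, \ldots, f_{2m}\}$ is a separating set of $2m$ elements. The main (and really quite modest) obstacle in this plan is carrying out the algebra of the inequality $\k \le q^{2m}$ cleanly; everything else is bookkeeping. One caveat worth flagging is that this argument is non-constructive and yields invariants of potentially very high degree with no combinatorial meaning, so producing an \emph{explicit} separating set of size $2m$ comparable to $\calT_m$ appears to require a genuinely different approach.
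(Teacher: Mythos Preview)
Your proof is correct and follows the same core strategy as the paper: both arguments hinge on the inequality $\k\le q^{2m}$, where $\k$ is the orbit count from Corollary~\ref{cor_orbit}. The paper obtains this with the slightly different estimate
\[
\k=\frac{q^{2m}+q^{m+1}-q^{m}+q-2}{2(q-1)}\le \frac{q^{2m}+q^{2m}}{q}=2q^{2m-1}\le q^{2m},
\]
and then simply invokes Theorem~1.1 of \cite{Kemper_Lopatin_Reimers_2022}, which asserts that the minimal size of a separating set over $\FF_q$ is exactly $\lceil\log_q(\k)\rceil$; this immediately gives $\gamma_{\rm sep}\le 2m$.

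What you do differently is unpack that black-box theorem: you build the injection into $\FF_q^{2m}$ by hand and then realize each coordinate function as an invariant polynomial via Lagrange interpolation through the separating set $\calT_m$ of Theorem~\ref{theo_separ}. This buys self-containment at the cost of length, and it makes the dependence on Theorem~\ref{theo_separ} explicit (the paper's citation hides the analogous step inside \cite{Kemper_Lopatin_Reimers_2022}). Your caveat about non-constructivity and high degree is apt and applies equally to the paper's proof, since the cited theorem is proved by exactly this kind of interpolation argument.
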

\begin{proof}
By Theorem 1.1 from~\cite{Kemper_Lopatin_Reimers_2022}, the least possible number of elements of a separating set for $\FF_q[V^m]^{\Op(\FF_q)}$ is 
$$\ga_{\rm sep}=\lceil\log_q(\k)\rceil,$$
where the number $\k$ of $\Op(\FF_q)$-orbits on $V^m$ was explicitly described in Corollary~\ref{cor_orbit}. We have 
$$\k=\frac{q^{2m} +q^{m+1} - q^m + q -2 }{2(q-1)}.$$
Since $-q^m + q -2 \leq 0$, $\frac{1}{2(q-1)}\leq \frac{1}{q}$, and $q^{m+1}\leq q^{2m}$ for all $m\geq1$ and $q\geq 2$ we obtain that $\k \leq 2\, q^{2m-1}\leq q^{2m}$. Thus $\ga_{\rm sep}\leq 2m$. 
\end{proof}

\medskip


\end{document}